\newcommand*\bigcdot{\mathpalette\bigcdot@{.5}}
\newcommand*\bigcdot@[2]{\mathbin{\vcenter{\hbox{\scalebox{#2}{$\m@th#1\bullet$}}}}}
\numberwithin{equation}{section}
\newtheorem{theorem}{Theorem}[section]
\newtheorem{lemma}[theorem]{Lemma}
\newtheorem{definition}[theorem]{Definition}
\numberwithin{equation}{section}
\theoremstyle{remark}
\newtheorem{remark}[theorem]{Remark}
\def\XXint#1#2#3{{\setbox0=\hbox{$#1{#2#3}{\int}$ }
\vcenter{\hbox{$#2#3$ }}\kern-.6\wd0}}
\newcommand{\bv}{\operatorname{BV}}
\newcommand{\dif}{\operatorname{d}\!}
\newcommand{\R}{\mathbb{R}}
\newcommand{\A}{\mathbb{A}}
\newcommand{\locc}{\operatorname{loc}}
\newcommand{\sobo}{\operatorname{W}}
\newcommand{\lebe}{\operatorname{L}}
\renewcommand{\leq}{\leqslant}
\newcommand{\id}{\operatorname{Id}}
\newcommand{\lin}{\operatorname{Lin}}
\newcommand{\calC}{\mathcal{C}}
\newcommand{\calL}{\mathcal{L}}
\newcommand{\smzs}{\setminus\{0\}}
\begin{document}
\title[Elliptic Systems with $L^1$ Data]{A Note on Estimates for Elliptic Systems with $L^1$ Data}
\author[B. Rai\cb{t}\u{a}]{Bogdan Rai\cb{t}\u{a}}
\author[D. Spector]{Daniel Spector}
\subjclass[2010]{Primary: 46E35; Secondary: 35J48}
\keywords{Linear elliptic systems, $\lebe^1$-estimates, Canceling operators.}
\begin{abstract}
In this paper we give necessary and sufficient conditions on the compatibility of a $k$th order homogeneous linear elliptic differential operator $\A$ and differential constraint $\calC$ for solutions of
\begin{align*}
		\A u=f\quad\text{subject to}\quad\calC f=0\quad\text{ in }\R^n
\end{align*}
to satisfy the estimates
\begin{align*}
\|D^{k-j}u\|_{\lebe^{\frac{n}{n-j}}(\R^n)}\leq c\|f\|_{\lebe^1(\R^n)}
\end{align*}
for $j\in \{1,\ldots,\min\{k,n-1\}\}$ and
\begin{align*}
\|D^{k-n}u\|_{\lebe^{\infty}(\R^n)}\leq c\|f\|_{\lebe^1(\R^n)}
\end{align*}
when $k\geq n$.  
\end{abstract}
\maketitle
\section{Introduction}
Let $f \in \lebe^1(\R^n,\R^n)$ and consider the problem of finding estimates for $u:\R^n \to \R^n$ which satisfies
\begin{align}\label{Laplace}
		-\Delta u=f \text{ in }\R^n.
\end{align}
While it is well-known that without further assumptions no inequalities of the form
\begin{align}
\|\nabla u\|_{\lebe^{\frac{n}{n-1}}(\R^n)}&\leq c\|f\|_{\lebe^1(\R^n)} \label{estimate1}
\quad\text{and}\\
\|u\|_{\lebe^{\frac{n}{n-2}}(\R^n)}&\leq c\|f\|_{\lebe^1(\R^n)} \label{estimate2}
\end{align}
are possible\footnote{Take $f$ to be a Dirac delta in any of its components.}, in the pioneering papers \cite{BourgainBrezis2004, BB07} J. Bourgain and H. Brezis have shown that under the additional constraint $\operatorname*{div} f=0$, \eqref{estimate1} and \eqref{estimate2} are indeed valid.  Precisely, their Theorem 2 in \cite{BB07} establishes the validity of \eqref{estimate1} and \eqref{estimate2} in three or more dimensions, while their Theorem 3 shows that in two dimensions one has \eqref{estimate1} and 
\begin{align}\label{estimate2'}
\|u\|_{\lebe^{\infty}(\R^n)}&\leq c\|f\|_{\lebe^1(\R^n)}.\end{align}
A simple proof of these estimates was subsequently given by J. Van Schaftingen in \cite{VS-CR}, who went on in \cite{VS} to show that the estimates \eqref{estimate1} and \eqref{estimate2} actually hold under very general assumptions on $f$ which we discuss in more detail in the sequel.

The purpose of this paper is to address the question of necessary and sufficient conditions to obtain estimates in this spirit for solutions of the elliptic system
\begin{align}\label{PDE}
		\A u=f \quad\text{subject to}\quad \calC f=0\quad\text{ in }\R^n
\end{align}
for $\A : C^\infty_c(\R^n,V) \to C^\infty_c(\R^n,E)$ a $k$th order homogeneous linear elliptic differential operator,  $\calC: C^\infty_c(\R^n,E) \to C^\infty_c(\R^n,F)$ an $l$th order  homogeneous linear differential operator, and $V,E,F$ finite dimensional inner product spaces.  In particular, our work builds upon the foundational results of J. Van Schaftingen \cite{VS} to give a complete characterization of the conditions on $\A$ and $\calC$ such that the estimates
\begin{align}\label{Lebesgueestimates}
\|D^{k-j}u\|_{\lebe^{\frac{n}{n-j}}(\R^n)}\leq c\|f\|_{\lebe^1(\R^n)}
\end{align}
hold for $j\in \{1,\ldots,\min\{k,n-1\}\}$ or
\begin{align}\label{Linftyestimate}
		\|D^{k-n}u\|_{\lebe^{\infty}(\R^n)}\leq c\|f\|_{\lebe^1(\R^n)}
	\end{align}
if $k \geq n$.

To this end let us recall what can already be said in light of the literature \cite{BourgainBrezis2004,BB07,VS,BVS,R_diff}.  We first consider the case of the estimates \eqref{Lebesgueestimates}.  Moving beyond the the preceding inequalities of J. Bourgain and H. Brezis \cite{BourgainBrezis2004,BB07}, J. Van Schaftingen's work (see Proposition 8.7 in \cite{VS}) shows that for
\begin{align*}
\A&=(-\Delta)^{k/2}
\end{align*}
one has \eqref{Lebesgueestimates} if and only if $\calC$ is cocanceling:
\begin{align}\label{eq:cocanc}
\bigcap_{\xi\in\R^n\smzs}\ker\calC(\xi)=\{0\}.
\end{align}

This notion of cocanceling utilizes the convention that the homogeneous linear differential operator $\calC$, which has a representation as
\begin{align*}
\calC f=\sum_{|\alpha|= l}C_\alpha \partial^\alpha f,\quad\text{for }f\colon\R^n\rightarrow E,
\end{align*}
for some $l \in \mathbb{N}_0$ and coefficients $C_\alpha\in\lin(E,F)$, can be viewed via its image under the Fourier transform, which is a matrix-valued polynomial defined by 
\begin{align*}
	\calC(\xi)=\sum_{|\alpha|= l}C_\alpha \xi^\alpha\in\lin(E,F),\quad\text{for }\xi\in\R^n.
	\end{align*}
The essence of the condition \eqref{eq:cocanc} is found in the proof of Proposition 2.1 in \cite{VS}, which shows
\begin{align}\label{cocanc_char}
		\bigcap_{\xi\in\R^n}\ker \calC(\xi)=\left\{e\in E\;\colon\; \calC(\delta_0 e)=0\right\}.
\end{align}
In particular, the heuristic principle concerning the failure of the inequalities \eqref{estimate1}, \eqref{estimate2}, and \eqref{estimate2'} precisely when $f$ contains a Dirac mass in one of its components is captured by the necessity and sufficiency of \eqref{eq:cocanc} via the equivalence \eqref{cocanc_char}.

While cocancelation gives the complete picture in characterizing the estimates \eqref{Lebesgueestimates} for $\A=(-\Delta)^{k/2}$, it ceases to be necessary when one has assumes additional structure on $\A$.  In particular, with no differential constraint $\calC$, J. Van Schaftingen has shown that the inequality \eqref{Lebesgueestimates} holds whenever $\A$ is canceling:
\begin{align}\label{eq:canc}
\bigcap_{\xi\in\R^n\smzs}\mathrm{im\,}\A(\xi)=\{0\}.
\end{align}
Here again we view $\A$ via its image under the Fourier transform, while this set also has an equivalent representation in terms of fundamental solutions of operator $\A$, which follows from the proof of Lemma 2.5 in \cite{R_diff}:
\begin{align}\label{canc_char}
		\bigcap_{\xi\in\R^n\setminus\{0\}}\mathrm{im\,}\A(\xi)=\left\{e\in E\;\colon\; \A u=\delta_0 e\text{ for some }u\in\lebe^1_{\locc}(\R^n,V)\right\}.
	\end{align} 
The connection of the conditions \eqref{cocanc_char} and \eqref{canc_char} here emerges, that for a canceling operator one can find a cocanceling annihilator and therefore apply the preceding analysis.

However, while $\calC f=0$ for some cocanceling operator $\calC$ or $\A$ is canceling is sufficient to imply the validity of \eqref{Lebesgueestimates} for $j\in \{1,\ldots,\min\{k,n-1\}\}$, neither is necessary.  Indeed, the first result of this paper is
\begin{theorem}\label{thm1}
	Let $\A,\,\calC$ be homogeneous linear differential operators on $\R^n$ from $V$ to $E$ and from $E$ to $F$, respectively. Suppose that $\A$ is elliptic and has order $k\in\mathbb{N}$. Consider the system
	\begin{align}\label{eq:system}
		\A u=f\quad\text{subject to}\quad\calC f=0\quad\text{ in }\R^n.
	\end{align}
	Let $j=1,\ldots,\min\{k,n-1\}$. Then the estimate for $u\in C^\infty_c(\R^n,V),\,f\in C^\infty_c(\R^n,E)$ satisfying \eqref{eq:system}
	\begin{align}\label{eq:estimate}
		\|D^{k-j}u\|_{\lebe^{\frac{n}{n-j}}(\R^n)}\leq c\|f\|_{\lebe^1(\R^n)}
	\end{align}
	holds if and only if
		\begin{align}\tag{CC}\label{eq:compensated_canc}
			\bigcap_{\xi\in\R^n\smzs}\mathrm{im\,}\A(\xi)\cap\bigcap_{\xi\in\R^n\smzs}\ker\calC(\xi)=\{0\}.
		\end{align}
\end{theorem}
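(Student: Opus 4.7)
My plan is to prove the two directions separately: necessity via a blow-up argument at the origin using the fundamental solution of $\A$, and sufficiency by packaging $\calC$ together with the image constraint implicit in $\A u=f$ into a single cocanceling operator to which Van Schaftingen's framework for $(-\Delta)^{k/2}$ applies.

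\textbf{(Necessity).} Suppose \eqref{eq:compensated_canc} fails and pick $e\neq 0$ in the intersection. By \eqref{canc_char} there exists $u_{0}\in\lebe^{1}_{\locc}(\R^{n},V)$ with $\A u_{0}=\delta_{0}e$; elliptic regularity together with the $k$-homogeneity of $\A$ forces $u_{0}$ to agree on $\R^{n}\smzs$, up to a polynomial and a possible logarithmic correction, with a function positively homogeneous of degree $k-n$. Consequently $D^{k-j}u_{0}$ has a $|x|^{j-n}$ singularity at the origin, so $|D^{k-j}u_{0}|^{n/(n-j)}$ fails to be locally integrable. By \eqref{cocanc_char}, $\calC(\delta_{0}e)=0$, hence $f_{\varepsilon}:=\rho_{\varepsilon}e\in\ccinfty(\R^{n},E)$ satisfies $\calC f_{\varepsilon}=0$ and $\|f_{\varepsilon}\|_{\lebe^{1}}=|e|$ for every $\varepsilon>0$, while $u_{\varepsilon}:=\rho_{\varepsilon}\ast u_{0}$ solves $\A u_{\varepsilon}=f_{\varepsilon}$. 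A standard approximation procedure (extending the $\ccinfty$ estimate to admissible $L^{1}$ data by density) applied to $(u_{\varepsilon},f_{\varepsilon})$ forces $\|D^{k-j}u_{\varepsilon}\|_{\lebe^{n/(n-j)}}\leq c|e|$, yet Fatou's lemma yields $\|D^{k-j}u_{\varepsilon}\|_{\lebe^{n/(n-j)}(B)}\to\infty$ on any fixed ball $B$ around the origin, a contradiction.

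\textbf{(Sufficiency).} Assume \eqref{eq:compensated_canc}. The key algebraic step is a polynomial annihilator of $\mathrm{im\,}\A(\xi)$: ellipticity makes $\A(\xi)^{*}\A(\xi)$ positive definite on $V$ for $\xi\neq 0$, and its adjugate yields a polynomial $B(\xi)\in\lin(E,V)$ with $B(\xi)\A(\xi)=p(\xi)I_{V}$, where $p(\xi):=\det(\A^{*}\A)(\xi)$ is a scalar polynomial strictly positive on $\R^{n}\smzs$. Setting
\begin{equation*}
\mathcal{Q}(\xi):=p(\xi)I_{E}-\A(\xi)B(\xi),
\end{equation*}
one verifies $\mathcal{Q}(\xi)\A(\xi)=0$ and, for $\xi\neq 0$, $\ker\mathcal{Q}(\xi)=\mathrm{im\,}\A(\xi)$. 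After raising the orders of $\calC$ and $\mathcal{Q}$ to a common value via iterated Laplacians (which does not alter any kernel at $\xi\neq 0$), the stacked operator $\widetilde{\calC}(\xi):=(\calC(\xi),\mathcal{Q}(\xi))$ is a homogeneous polynomial matrix and satisfies $\widetilde{\calC}(D)f=0$: indeed $\calC f=0$ by hypothesis, and $\mathcal{Q}(D)\A u=p(D)\A u-\A B(D)\A u=0$ using $B(D)\A(D)=p(D)I_{V}$. Crucially, \eqref{eq:compensated_canc} is exactly
\begin{equation*}
\bigcap_{\xi\in\R^{n}\smzs}\ker\widetilde{\calC}(\xi)=\bigcap_{\xi\in\R^{n}\smzs}\ker\calC(\xi)\cap\bigcap_{\xi\in\R^{n}\smzs}\mathrm{im\,}\A(\xi)=\{0\},
\end{equation*}
so $\widetilde{\calC}$ is cocanceling.

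Now invoke Proposition~8.7 of \cite{VS} with the operator $(-\Delta)^{k/2}$ and the cocanceling $\widetilde{\calC}$: there is $v$ with $(-\Delta)^{k/2}v=f$ and $\|D^{k-j}v\|_{\lebe^{n/(n-j)}}\leq c\|f\|_{\lebe^{1}}$. The identity $\A u=f=(-\Delta)^{k/2}v$ Fourier-transforms to $\widehat{u}(\xi)=M(\xi)\widehat{v}(\xi)$ with $M(\xi):=(\A(\xi)^{*}\A(\xi))^{-1}\A(\xi)^{*}|\xi|^{k}$, a matrix-valued symbol smooth on $\R^{n}\smzs$ and positively homogeneous of degree zero. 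By Mihlin--H\"ormander the associated Fourier multiplier $M(D)$ is bounded on $\lebe^{n/(n-j)}(\R^{n})$, and it commutes with $D^{k-j}$, whence
\begin{equation*}
\|D^{k-j}u\|_{\lebe^{n/(n-j)}}=\|M(D)D^{k-j}v\|_{\lebe^{n/(n-j)}}\leq C\|D^{k-j}v\|_{\lebe^{n/(n-j)}}\leq Cc\|f\|_{\lebe^{1}},
\end{equation*}
which is \eqref{eq:estimate}. The main obstacle is the clean algebraic setup of $\mathcal{Q}$ and the bookkeeping needed to realize $\widetilde{\calC}$ as a genuinely homogeneous cocanceling operator in the scope of Van Schaftingen's proposition; once this is in place, the passage from $(-\Delta)^{k/2}$ to $\A$ is a standard zeroth-order Calder\'on--Zygmund correction.
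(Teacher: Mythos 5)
Your necessity argument is essentially the paper's: exhibit a fundamental solution $u_0$ of $\A$ with source $\delta_0 e$ for $e\neq 0$ in the intersection, note that the constraint is automatically met, and derive a contradiction from the $(j-n)$-homogeneous singularity of $D^{k-j}u_0$. Your version substitutes an explicit mollification/Fatou step for the paper's direct invocation of the convolution identity $D^{k-j}v=H_{j-n}\ast\A v$ from Bousquet--Van Schaftingen. One detail you should state: that the $(j-n)$-homogeneous profile of $D^{k-j}u_0$ is not identically zero. This follows, e.g., because $D^{k-j}u_0\equiv 0$ would force $u_0$ to be a polynomial and hence $\A u_0$ smooth rather than $\delta_0 e$, or more structurally because $0\neq e\in\mathrm{im\,}\A(\xi)$ implies $\A^\dagger(\xi)e\neq 0$, so the kernel $H_{j-n}(\cdot)e$ is nontrivial. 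Without this observation the Fatou blow-up is not actually forced.

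Your sufficiency direction is correct but follows a genuinely different route from the paper. The paper forms the stacked annihilator $(L(D),\calC)$, with $L(D)$ an abstract exact annihilator of $\A$ quoted from [VS, Sec.~4.2], proves it is cocanceling, and then deduces $\|\A u\|_{\dot{\sobo}^{-1,n/(n-1)}}\leq c\|f\|_{\lebe^1}$; a single H\"ormander--Mihlin application converts this to $\|D^{k-1}u\|_{\lebe^{n/(n-1)}}$, and the remaining $j$ follow by iterating the Sobolev embedding. You instead make the annihilator explicit via the adjugate, $\mathcal{Q}(\xi)=p(\xi)I_E-\A(\xi)B(\xi)$, reduce through Proposition~8.7 of [VS] to the model operator $(-\Delta)^{k/2}$ directly at the target exponent, and then transport to $\A$ with the $0$-homogeneous multiplier $M(\xi)=\A^\dagger(\xi)|\xi|^k$. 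This is a clean and somewhat more modular presentation, but it contains one genuine gap you must repair: raising the order of $\calC$ to match $\deg\mathcal{Q}$ by ``iterated Laplacians'' only works when the two orders have the same parity, and here $\deg\mathcal{Q}=2k\dim V$ is always even, so any odd-order $\calC$ is left unhandled. The correct device is precisely the one the paper isolates in Lemma~\ref{lem:alg_reduction}: replace a row $C_j$ of degree $d_j$ by $\tilde C_j(\xi)=C_j(\xi)\otimes^{\,l-d_j}\xi$, i.e.\ $\tilde C_j f = D^{l-d_j}(C_j f)$, which preserves $\ker$ at every $\xi\neq 0$, preserves the class $\{f\in C^\infty_c:\calC f=0\}$, and works for any parity. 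With that substitution your sufficiency proof goes through.
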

This result is in the spirit of Theorem 7.1 in \cite{VS}, where the author introduces a notion of partially canceling operators.   The idea there, which we build upon here, is that while neither \eqref{eq:cocanc} nor \eqref{eq:canc} is empty, the two are disjoint.  While in \cite{VS} J. Van Schaftingen treats the case $\calC=T \in \lin(E,F)$ is a linear map from $E$ to $F$, our result handles the case of homogeneous differential operators, which is reduced to his framework by our Lemma \ref{lem:alg_reduction} below.  
\begin{remark}
 One can consider more general differential operators $\calC$, which are not homogeneous, as we make precise below in Definition~\ref{def:dif_op}. In particular, the estimate \cite[Thm.~1.4]{VS} for cocanceling operators is extended to such operators (see Lemma~\ref{lem:alg_reduction} and Remark~\ref{rk:cocanc} below). Fractional estimates, e.g., $u\in\dot{\sobo}{^{k-s,n/(n-s)}}$ for $0<s<n$ and $s\leq k$, are also possible, by merging our ideas with \cite[Sec.~8]{VS}.
\end{remark}

Theorem \ref{thm1}, for example, shows that with $V=\R^3$, $E=\R^4$, $F=\R$ and 
\begin{align*}
\A &\coloneqq (\operatorname*{div}, \operatorname*{curl}),\\
\calC f &\coloneqq \partial_1 f_1+\partial_2 f_2+\partial_3f_3,
\end{align*}
solutions to \eqref{PDE} admit the estimate \eqref{eq:system} with $j=k=1$ for any $n \geq 2$. Notice that $\calC$ is not cocanceling because its kernel contains $\delta_0e_4$ (which means that \eqref{eq:cocanc} contains the vector $e_4$), while $\A$ is not canceling, as its image contains $\delta_0e_1$ (which means that \eqref{eq:canc} contains the vector $e_1$).  Here $\{e_j\}_{j=1}^4$ is the standard orthonormal basis of $\R^4$.  

Returning to the question of the validity of the embedding \eqref{Linftyestimate} for $k\geq n$, P. Bousquet and J. Van Schaftingen \cite{BVS} have shown that such an inequality holds whenever $\A$ is canceling, while the first author has proved in \cite{R_diff} that this holds if and only if $\A$ is weakly canceling:
\begin{align}\label{eq:w_canc}
\int_{\mathbb{S}^{n-1}}\A(\xi)^{-1}e\otimes^{k-n}\xi\dif\mathscr{H}^{n-1}(\xi)=0\quad\text{for all }e\in\bigcap_{\xi\in\R^n\smzs}\mathrm{im\,}\A(\xi),
\end{align}
where $v\otimes^j\xi\coloneqq v\otimes \xi\otimes\ldots\otimes \xi$, where the outer product is taken $j$ times.

The question of the validity of such an inequality for $\calC f=0$, $\calC$ cocanceling, has not thus far been explicitly addressed, save the new various compatibility conditions that we introduce.  
In this regime we show
\begin{theorem}\label{thm2}
	Let $\A,\,\calC$ be homogeneous linear differential operators on $\R^n$ from $V$ to $E$ and from $E$ to $F$, respectively. Suppose that $\A$ is elliptic and has order $k\geq n$. Then the estimate for $u\in C^\infty_c(\R^n,V),\,f\in C^\infty_c(\R^n,E)$ satisfying \eqref{eq:system}
	\begin{align}\label{eq:estimate_infty}
		\|D^{k-n}u\|_{\lebe^{\infty}(\R^n)}\leq c\|f\|_{\lebe^1(\R^n)}
	\end{align}
	holds if and only if
		\begin{align}\tag{CWC}\label{eq:compensated_w_canc}
\int_{\mathbb{S}^{n-1}}\A(\xi)^{-1}e\otimes^{k-n}\xi\dif\mathscr{H}^{n-1}(\xi)=0\text{ for }e\in\bigcap_{\xi\in\R^n\smzs}\mathrm{im\,}\A(\xi)\cap\ker\calC(\xi).
		\end{align}
\end{theorem}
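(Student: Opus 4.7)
The strategy parallels Theorem \ref{thm1}: one reduces the differential constraint $\calC f = 0$ to a linear one via Lemma \ref{lem:alg_reduction}, then invokes the weak-canceling characterization of $\lebe^\infty$-estimates from \cite{BVS, R_diff}.

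For necessity, I exploit the fundamental-solution identities \eqref{cocanc_char} and \eqref{canc_char}. Fix $e \in \bigcap_{\xi \in \R^n \smzs}(\mathrm{im\,}\A(\xi) \cap \ker \calC(\xi))$. Since $e \in \bigcap_\xi \ker \calC(\xi)$, \eqref{cocanc_char} yields $\calC(\delta_0 e) = 0$; since $e \in \bigcap_\xi \mathrm{im\,}\A(\xi)$, \eqref{canc_char} provides $u = \greenA e \in \lebe^1_{\locc}(\R^n, V)$ with $\A u = \delta_0 e$. With $\rho_\ep$ a standard mollifier, set $f_\ep := \rho_\ep e$ and $u_\ep := u * \rho_\ep$, so that $\calC f_\ep = 0$, $\A u_\ep = f_\ep$, and $\|f_\ep\|_{\lebe^1} = \|e\|$. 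After truncating to obtain compactly-supported pairs, the hypothesized estimate gives $\|D^{k-n} u_\ep\|_{\lebe^\infty} \leq c\|e\|$ uniformly in $\ep$, so $D^{k-n}(\greenA e) \in \lebe^\infty(\R^n)$ in the distributional limit. Since $\A$ is elliptic of order $k \geq n$, $\greenA e$ admits a canonical decomposition near the origin into a polynomial of degree $\leq k - n$, a homogeneous degree-$(k-n)$ part, and a $\log|x|$-contribution whose leading coefficient is a constant multiple of the spherical integral in \eqref{eq:compensated_w_canc}; local $\lebe^\infty$-boundedness of $D^{k-n}\greenA e$ near $0$ forces this coefficient to vanish, establishing \eqref{eq:compensated_w_canc}.

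For sufficiency, Lemma \ref{lem:alg_reduction} yields a linear $T \in \lin(E, F')$ with $\ker T = \bigcap_\xi \ker \calC(\xi)$, transferring the problem into the algebraic-constraint setting. The identity $\bigcap_\xi (\mathrm{im\,}\A(\xi) \cap \ker \calC(\xi)) = \bigcap_\xi \mathrm{im\,}\A(\xi) \cap \ker T$ shows that \eqref{eq:compensated_w_canc} coincides with the weak-canceling condition \eqref{eq:w_canc} for the pair $(\A, T)$. The $\lebe^\infty$-estimate for such pairs is then exactly the content of the weak-canceling result of \cite{BVS, R_diff}, whose proof modifies the fundamental solution by a tensor in $\ker T$ to cancel the $\log|x|$-obstruction.

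The main obstacle is the precise identification of the $\log|x|$-coefficient of $\greenA e$ as the spherical integral in \eqref{eq:compensated_w_canc}, requiring an explicit analysis of the fundamental solution for elliptic operators of order $k \geq n$ and care that polynomial/homogeneous pieces of $\greenA$ do not mask the contribution. A secondary technical point is checking that Lemma \ref{lem:alg_reduction}, tailored for the $\lebe^{n/(n-j)}$-estimates of Theorem \ref{thm1}, transfers faithfully to the $\lebe^\infty$-level without losing the weak-canceling structure.
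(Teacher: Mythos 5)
Your necessity argument has the right shape: construct a singular solution $\greenA e$ from $e$ in the intersection $\bigcap_\xi(\mathrm{im\,}\A(\xi)\cap\ker\calC(\xi))$, and argue that the $\log$-coefficient of $D^{k-n}\greenA e$ is the spherical integral in \eqref{eq:compensated_w_canc}, so the estimate forces it to vanish. You flag the identification of that coefficient as the ``main obstacle,'' but this is precisely what the explicit convolution identity \eqref{eq:conv} from \cite{R_diff} supplies: $D^{k-n}u = K\ast\A u$ with $K = H_0 + \log|\cdot|M_\A$, $H_0$ bounded and zero-homogeneous. Applying this at $\A u = \delta_0 e$ gives $D^{k-n}u = H_0 e + \log|\cdot|M_\A e$ directly, so the paper's necessity proof needs no mollification/truncation step and no fresh analysis of the fundamental solution; the obstacle you describe was already closed by the cited lemma.

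Your sufficiency argument has a genuine gap. You assert that Lemma~\ref{lem:alg_reduction} yields a \emph{linear map} $T\in\lin(E,F')$ with $\ker T = \bigcap_\xi\ker\calC(\xi)$, and then appeal to a ``weak-canceling result for pairs $(\A,T)$'' in \cite{BVS,R_diff}. Neither step holds. Lemma~\ref{lem:alg_reduction} produces a \emph{homogeneous differential operator} $\tilde\calC$, not a zeroth-order one; the PDE constraint $\calC f = 0$ is not equivalent to any pointwise algebraic constraint $Tf = 0$ (e.g.\ $\operatorname{div}f=0$ does not confine $f(x)$ to a fixed subspace), so the problem does not transfer to an algebraic-constraint setting. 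Moreover, the $\lebe^\infty$ weak-canceling theorems in \cite{BVS,R_diff} are stated for $\A$ alone with no auxiliary constraint; there is no ``weak-canceling for $(\A,T)$'' statement in those references to invoke---that is exactly what Theorem~\ref{thm2} is establishing, so citing it is circular. The paper's actual route is Lemma~\ref{lem:BVS_var}: a direct potential-theoretic bound $\bigl|\int\langle\log|x|M^*w,f(x)\rangle\dif x\bigr|\leq c\|f\|_{\lebe^1}$ valid whenever $\calL f=0$ and $M\bigl(\bigcap_\xi\ker\calL(\xi)\bigr)=\{0\}$, proved by constructing a right-inverse $P$ to $\calL^*$ on $\mathrm{im\,}M^*$ and integrating by parts to kill the log. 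One applies it with $M=M_\A$ and $\calL=(\calC,L(D))$, where $L(D)$ is an exact annihilator of $\A$; the hypothesis of the lemma is then literally \eqref{eq:compensated_w_canc}. This lemma, rather than a reduction to a linear constraint, is the ingredient your sketch is missing.
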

Theorem \ref{thm2} implies that for even dimensions solutions of \eqref{PDE} with
\begin{align*}
\A &= (-\Delta)^{n/2}\\
\calC &= \operatorname*{div}
\end{align*}
are bounded, which is a higher dimensional analogue of the estimate \eqref{estimate2'} to the equation \eqref{Laplace} due to J. Bourgain and H. Brezis when $n=2$ (naturally the order of the equation must be modified to achieve an $\lebe^\infty$ embedding).  More generally, this applies for any cocanceling operator $\calC$, while again one can construct $\calC$ which are not cocanceling and $\A$ which are not weakly canceling for which our result holds, e.g., in $\R^4$ with $V=\R^2$, $E=\R^3$, $F=\R$ and
\begin{align*}
\A &\coloneqq ((\partial_1^4+\partial_2^4) u_1, \partial_3^4 u_2,\partial_4^4u_2)\\
\calC f&\coloneqq \partial_1 f_1+\partial_2f_2.
\end{align*}
For this example, one computes explicitly that $e_3\in\ker\calC(\xi)$ for all $\xi\neq0$, so that $\calC$ is not cocanceling.  On the other hand, $e_1\in\mathrm{im\,}\A(\xi)$ for all $\xi\neq0$ and that $M_\A e_1\neq0$, so that $\A$ is not weakly canceling (see \eqref{eq:w_canc}). 

The emergence of the integral over the sphere in \eqref{eq:compensated_w_canc} and \eqref{eq:w_canc} stems from the convolution formula proved in \cite[Sec.~3]{R_diff} building on \cite[Thm.~7.1.20]{HormI}, namely 
\begin{align}\label{eq:conv}
	D^{k-n}u=K\ast \A u\quad\text{for}\quad u\in C^\infty_c(\R^n,V),\quad\text{where}\quad K=H_0+\log|\cdot|M_\A
\end{align}
for
\begin{align}\label{eq:M_A}
M_\A e\coloneqq\int_{\mathbb{S}^{n-1}}\A^\dagger(\xi)e\otimes^{k-n}\xi\dif\mathscr{H}^{n-1}(\xi)\quad \text{for }e\in E,
\end{align}
where $\A^\dagger(\xi)\coloneqq(\A^*(\xi)\A(\xi))^{-1}\A^*(\xi)$.
 Here $H_0\in C^\infty(\R^n\setminus\{0\},\lin(E,V))$ is zero-homogeneous and we consider a renormalization of the Fourier transform such that the constants are correct.

\section{Proofs}
\begin{definition}\label{def:dif_op}
	We will \emph{only} work with vectorial partial differential operators on $\R^n$ which have real constant coefficients and are homogeneous in each entry. To make this precise, an operator $\calC$ on $\R^n$ from $E$ to $F$ can be written as 
	$$
	(\calC(\xi)e)_j=\langle C_j(\xi),e\rangle,\quad  e\in E,\,\xi\in\R^n,\quad j=1,\ldots,\dim F,
	$$
	where $C_j$ are $E$-valued \emph{homogeneous} polynomials. A homogeneous operator $\calC$ will correspond to all $C_j$ being homogeneous of the same degree, say $l$, in which case we can write 
	$$
	\calC(\xi)=\sum_{|\beta|=l} \xi^\beta \calC_\beta,
	$$
	which is a $\lin(E,F)$-valued homogeneous polynomial (here $\calC_\beta\in\lin(E,F)$).
\end{definition}
The following algebraic reduction lemma will play an important role in establishing sufficiency of either \eqref{eq:compensated_canc} or \eqref{eq:compensated_w_canc} for the claimed estimates.
\begin{lemma}\label{lem:alg_reduction}
	Let $\calC$ be a linear differential operator on $\R^n$ from $E$ to $F$, as given by Definition~\ref{def:dif_op}. Then there exists a \emph{homogeneous} differential operator $\tilde\calC$ on $\R^n$ from $E$ to another vector space $\tilde F$ such that
	\begin{align*}
		\ker \tilde{\calC}(\xi)=\ker \calC(\xi)\quad \text{for all }\xi\in\R^n
	\end{align*}
	and
	\begin{align*}
		\{f\in C^\infty_c(\R^n,E)\colon \tilde\calC f=0 \}=		\{f\in C^\infty_c(\R^n,E)\colon \calC f=0 \}.
	\end{align*}
\end{lemma}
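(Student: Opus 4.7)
The plan is to uniformize the row degrees of $\calC$ by multiplying each row by a complete set of monomials of the appropriate degree. With the notation of Definition~\ref{def:dif_op}, let $l_j$ denote the homogeneity degree of the row polynomial $C_j$ and set $l := \max_{j} l_j$. For each $j \in \{1,\ldots,\dim F\}$ write $N_j := \#\{\alpha \in \N_0^n : |\alpha| = l - l_j\}$, take $\tilde F := \bigoplus_{j=1}^{\dim F} \R^{N_j}$, and define $\tilde\calC$ by rows indexed by pairs $(j,\alpha)$ via
\begin{equation*}
(\tilde\calC(\xi) e)_{j,\alpha} \,:=\, \xi^\alpha \langle C_j(\xi), e \rangle, \qquad |\alpha| = l - l_j.
\end{equation*}
Every row is then a homogeneous polynomial of degree $(l-l_j) + l_j = l$, so $\tilde\calC$ is homogeneous of order $l$ in the sense of Definition~\ref{def:dif_op}.

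Next I would verify the pointwise kernel identity. The inclusion $\ker \calC(\xi) \subset \ker \tilde\calC(\xi)$ is immediate from the definition. For the converse at $\xi \neq 0$, I pick any coordinate $i$ with $\xi_i \neq 0$; the multiindex $\alpha = (l-l_j)e_i$ satisfies $|\alpha| = l - l_j$ and $\xi^\alpha = \xi_i^{l - l_j} \neq 0$, so $(\tilde\calC(\xi) e)_{j,\alpha} = 0$ forces $\langle C_j(\xi), e\rangle = 0$. Intersecting over $j$ yields $\ker \tilde\calC(\xi) = \ker \calC(\xi)$. Under the standard convention that each $l_j \geq 1$ (i.e.\ $\calC$ has no purely algebraic row), both kernels reduce to $E$ at $\xi = 0$ as well.

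For the kernels on $C^\infty_c(\R^n, E)$, the forward inclusion $\{\calC f = 0\} \subset \{\tilde\calC f = 0\}$ is clear because each row of $\tilde\calC(D)$ is an iterated partial derivative of a row of $\calC(D)$. Conversely, $\tilde\calC f = 0$ says that $D^\alpha (C_j(D) f) = 0$ for every multiindex $\alpha$ with $|\alpha| = l - l_j$, so each $C_j(D) f$ is a polynomial of degree less than $l - l_j$; since $f$ has compact support so does $C_j(D) f$, and the only compactly supported polynomial is identically zero, so $\calC f = 0$. The main (mild) obstacle here is the observation that a complete family of degree-$(l - l_j)$ monomials cannot simultaneously vanish at any $\xi \neq 0$, which is precisely what makes the symbol kernel survive the multiplication step; everything else is bookkeeping.
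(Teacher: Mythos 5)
Your proposal is essentially the same construction as the paper's: uniformize the row degrees by multiplying each row $C_j$ by a complete collection of degree-$(l-l_j)$ monomials, then verify the symbol kernels coincide on $\R^n\smzs$ and that the smooth kernels coincide using compact support. The paper realizes the degree padding via the tensor $C_j(\xi)\otimes^{l-d_j}\xi$, which carries the same information as your multiindex-indexed family $\xi^\alpha\langle C_j(\xi),e\rangle$, $|\alpha|=l-l_j$; this is a notational difference only. Your verifications are correct, and if anything a bit more explicit than the paper's: you pin down a specific monomial $\xi_i^{l-l_j}$ that survives at $\xi\neq 0$, and you spell out the intermediate step that $D^\alpha(C_j(D)f)=0$ for all $|\alpha|=l-l_j$ forces $C_j(D)f$ to be a polynomial, hence zero by compact support (the paper compresses this to a single sentence). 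You also flag the genuine subtlety at $\xi=0$ when some row has degree $0$ while $l>0$, which the paper leaves implicit; as you note, this is harmless because the applications (cocancellation) only probe $\xi\neq 0$, but it is a fair observation about the literal ``for all $\xi\in\R^n$'' wording.
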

\begin{proof}
We write $(\calC(\xi)e)_j=\langle \calC_j(\xi),e\rangle$ for the rows of $\calC$, $j=1,\ldots,\dim F$. These define (scalar) differential operators on $\R^n$ from $E$ to $\R$. Let now $d_j$ be the degree of $C_j$ and consider an integer $l\geq \max\{d_j\}_{j=1}^{\dim F}$. Define the differential operators
\begin{align*}
	\tilde C_j(\xi)\coloneqq C_j(\xi)\otimes ^{l-d_j}\xi,\quad\text{ so that }\quad\tilde C_j f=D^{l-d_j}C_j f\text{ for }f\in C^\infty_c(\R^n,E).
\end{align*}
Defining $\tilde\calC$ to be the collection of all the equations given by $\tilde C_j$, it is immediate to see that the inclusions ``$\supset$'' hold.

Conversely, if $f\in C^\infty_c(\R^n,E)$ is such that $\tilde \calC f=0$, we have from the above formula that $D^{l-d_j} C_j f=0$ for all $j$. Since $C_j f\in C^\infty_c(\R^n)$, we conclude that $\calC f=0$. The other conclusion follows in a similar way, using the fact that $\otimes^{l-d_j}\xi\neq0$ whenever $\xi\neq 0$.
\end{proof}
	\begin{remark}\label{rk:cocanc}
	A first relevant consequence of Lemma~\ref{lem:alg_reduction} is that the estimate for cocanceling operators \cite[Thm.~1.4]{VS} holds for a larger class of (inhomogeneous) operators, as given by Definition~\ref{def:dif_op}. In this case, cocancellation would be defined the same as in \cite[Def.~1.2]{VS}.
\end{remark}
We can now proceed with the proof of  Theorem~\ref{thm1}.
\begin{proof}[Proof of necessity of \eqref{eq:compensated_canc}]
	Suppose that condition~\eqref{eq:compensated_canc} fails, so there exists $0\neq e\in\mathrm{im\,}\A(\xi)\cap\ker\calC(\xi)$ for all $\xi\neq0$.
		 Then $\calC (\delta_0e)=0$ and there exists $u\in\lebe^1_{\locc}(\R^n,V)$ such that $\A u=\delta_0e$ (see the proofs of \cite[Prop.~2.1]{VS} and \cite[Lem.~2.5]{R_diff}). In particular, $u$ is admissible for the estimate \eqref{eq:estimate}. We recall from \cite[Lem.~2.1]{BVS} that for $v\in C^\infty_c(\R^n,V)$, its derivatives can be retrieved from $\A v$ by convolution. In particular, if $j\leq\min\{k,n-1\}$, we have that $D^{k-j}v=H_{j-n}\ast \A v$, where $H_{j-n}\in C^\infty(\R^n\setminus\{0\})$ is a $(j-n)$-homogeneous kernel. It follows that $D^{k-n}u=H_{j-n}e$, which contradicts the estimate unless $e=0$.
\end{proof}

\begin{proof}[Proof of sufficiency of \eqref{eq:compensated_canc}]
	From \cite[Sec.~4.2]{VS}, we know that there exists a homogeneous linear differential operator $L(D)$ such that $\ker L(\xi)=\mathrm{im\,}\A(\xi)$ for all $\xi\neq 0$. In particular, condition~\eqref{eq:compensated_canc} implies that the operator $\calL\eqqcolon(L(D),\calC)$ is cocanceling, so that by Remark~\ref{rk:cocanc} we have the estimate
	\begin{align}\label{eq:est1}
		\|\A u\|_{\dot{\sobo}{^{-1,\frac{n}{n-1}}}(\R^n)}=\|f\|_{\dot{\sobo}{^{1,n}}(\R^n)^*}\leq c\|f\|_{\lebe^1(\R^n)}
	\end{align}
	for $u\in C^\infty_c(\R^n,V)$, $f\in C^\infty_c(\R^n,E)$ satisfying \eqref{eq:system}. We then write in Fourier space
	\begin{align*}
		\widehat{D^{k-1}u}(\xi)=|\xi|\A^\dagger(\xi)\frac{\widehat{\A u}(\xi)}{|\xi|}\otimes^{k-1}\xi,
	\end{align*}
	so that the H\"ormander-Mihlin multiplier theorem implies that 
	\begin{align}\label{eq:est2}
		\|D^{k-1}u\|_{\lebe^{\frac{n}{n-1}}(\R^n)}\leq c\left\|\mathscr{F}^{-1}\left(\frac{\widehat{\A u}(\xi)}{|\xi|}\right)\right\|_{\lebe^{\frac{n}{n-1}}(\R^n)}=c\|\A u\|_{\dot{\sobo}{^{-1,\frac{n}{n-1}}}(\R^n)}.
	\end{align}
	Collecting estimates \eqref{eq:est1} and \eqref{eq:est2}, we obtain the desired inequality for $j=1$. The inequalities for $j=2,\ldots,\min\{k,n-1\}$ follow by iteration of the Sobolev inequality.
\end{proof}
It remains to prove Theorem~\ref{thm2}. Recall the definition \eqref{eq:M_A}.
\begin{proof}[Proof of necessity of \eqref{eq:compensated_w_canc}] 
	Suppose that condition~\eqref{eq:compensated_w_canc} fails, so there exists $0\neq e\in\mathrm{im\,}\A(\xi)\cap\ker\calC(\xi)$ for all $\xi\neq0$ such that $M_\A e\neq0$.
	 Then $\calC (\delta_0e)=0$ and there exists $u\in\lebe^1_{\locc}(\R^n,V)$ such that $\A u=\delta_0e$ (see the proofs of \cite[Prop.~2.1]{VS} and \cite[Lem.~2.5]{R_diff}). In particular, $u$ is admissible for the estimate \eqref{eq:estimate}. By \eqref{eq:conv},
\begin{align*}
\|D^{k-n}u\|_{\lebe^\infty}\geq|\|H_0e\|_{\lebe^\infty}-\|\log|\cdot|M_\A e\|_{\lebe^{\infty}}|,
\end{align*}
which is clearly infinite (near 0) since $M_\A e\neq0$ and $H_0$ is bounded.
\end{proof}
To prove sufficiency of \eqref{eq:compensated_w_canc}, we employ a streamlined variant of \cite[Lem.~3.1]{R_diff}, which relies on \cite[Lem.~2.2]{BVS} and \cite[Lem.~2.5]{VS} (see also \cite{VS,BB07,VS_BMO,Mazya_JEMS,BousquetMironescu}).
\begin{lemma}\label{lem:BVS_var}
Let $\calL$ be a linear differential operator on $\R^n$ from $E$ to $F$ as given in Definition~\ref{def:dif_op} and $M\in\lin(E,W)$. Suppose that $M\left(\bigcap_{\xi\neq0}\ker \calL(\xi)\right)=\{0\}$. Then for all $w\in W$ with $|w|=1$ we have
\begin{align*}
\left|\int_{\R^n}\langle\log|x|M^*w,f(x)\rangle\dif x\right|\leq c\|f\|_{\lebe^1(\R^n)}\quad\text{for }f\in C^\infty_c(\R^n,E)\text{ such that }\calL f=0.
\end{align*}
\end{lemma}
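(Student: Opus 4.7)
The plan is to first reduce $\calL$ to a cocanceling operator via projection, then to exploit the cancellation $\int\langle M^*w,f\rangle\,dx=0$ provided by the hypothesis, and finally to estimate the resulting integral against $\log|x|$ via a layer-cake representation together with the known cocanceling estimates \cite[Lem.~2.5]{VS} and \cite[Lem.~2.2]{BVS}. The main obstacle will be controlling the inner ball-integrals at small radii, where crude $L^1$ bounds fail.

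We first observe that for $f\in C^\infty_c(\R^n,E)$ with $\calL f=0$, the Fourier transform $\hat f$ is continuous and satisfies $\calL(\xi)\hat f(\xi)=0$ for every $\xi$; by homogeneity, $\hat f(t\omega)\in\ker\calL(t\omega)=\ker\calL(\omega)$ for $t>0$ and $\omega\in\mathbb{S}^{n-1}$, so passing to $t\to 0^+$ yields $\int f=\hat f(0)\in V_0:=\bigcap_{\xi\ne0}\ker\calL(\xi)$. The assumption $M(V_0)=\{0\}$ is equivalent to $M^*w\in V_0^\perp$, and therefore $\int\phi\,dx=0$, where $\phi:=\langle M^*w,f\rangle$. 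We then reduce to the cocanceling case: letting $P\colon E\to V_0^\perp$ be the orthogonal projection, we have $\calL(Pf)=\calL f=0$ (since $V_0\subset\ker\calL(\xi)$ for every $\xi$), and the restricted operator $\tilde\calL:=\calL|_{V_0^\perp}$ is cocanceling, because $\bigcap_{\xi\ne 0}\ker\tilde\calL(\xi)=V_0\cap V_0^\perp=\{0\}$. Since $M^*w\in V_0^\perp$ gives $\langle M^*w,f\rangle=\langle M^*w,Pf\rangle$, we may henceforth assume that $\calL$ is cocanceling and $f$ is $V_0^\perp$-valued.

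Using the layer-cake identity $\log|x|=\int_1^\infty\chi_{|x|\ge r}r^{-1}\,dr-\int_0^1\chi_{|x|\le r}r^{-1}\,dr$ together with $\int\phi=0$, we obtain by Fubini
\[\int_{\R^n}\log|x|\,\phi(x)\,dx=-\int_0^\infty\frac{1}{r}\int_{B_r(0)}\phi(x)\,dx\,dr,\]
in which the tail $r\ge\sup_{x\in\spt f}|x|$ contributes nothing. It remains to bound $\int_0^\infty r^{-1}|\int_{B_r}\phi\,dx|\,dr$ by $c\|f\|_{\lebe^1}$; for this we combine the Poincar\'e-type inequality for cocanceling operators \cite[Lem.~2.5]{VS} (applied after smoothing $\chi_{B_r}M^*w$ into the Lipschitz regularity required there) with the dyadic spherical-decomposition estimate from \cite[Lem.~2.2]{BVS} that extracts the decay of $|\int_{B_r}\phi|$ as $r\to 0$. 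The hard part will be precisely this last bound: the naive estimate $|\int_{B_r}\phi|\le\|\phi\|_{\lebe^1}$ is not $dr/r$-integrable near $0$, so the cocanceling structure of $f$---beyond the scalar cancellation $\int\phi=0$---must be fully exploited to gain the missing decay, which is the content of \cite[Lem.~2.2]{BVS}.
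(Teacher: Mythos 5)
Your setup and reductions are correct: the observation that $\int f = \hat f(0)\in V_0:=\bigcap_{\xi\neq0}\ker\calL(\xi)$ (which uses the homogeneity of each row of $\calL$), the translation of the hypothesis into $M^*w\in V_0^\perp$, the projection onto $V_0^\perp$ to replace $\calL$ by a cocanceling operator, and the layer-cake identity
\[\int_{\R^n}\log|x|\,\phi(x)\,dx=-\int_0^\infty\frac{1}{r}\int_{B_r}\phi\,dx\,dr\]
are all valid. However, what you present is an outline, not a proof: the entire content of the lemma is concentrated in the step you flag as ``the hard part'', namely proving
\[\int_0^\infty\frac{1}{r}\left|\int_{B_r}\phi\,dx\right|\dif r\leq c\|f\|_{\lebe^1},\]
and you defer this to \cite[Lem.~2.2]{BVS} without stating or checking what that lemma actually gives. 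The naive bound $|\int_{B_r}\phi|\leq\|\phi\|_{\lebe^1}$ is not $dr/r$-integrable near zero, and even a routine application of the cocanceling estimate of \cite[Lem.~2.5]{VS} to a smoothing of $\chi_{B_r}M^*w$ only yields a bound uniform in $r$, not one decaying as $r\to0$. So the decay must be extracted from a finer compensation estimate, and that is precisely the step the paper circumvents.

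The paper's own proof follows a genuinely different and fully self-contained route. After reducing (via Lemma~\ref{lem:alg_reduction}) to $\calL$ homogeneous of order $l$, it observes that the hypothesis makes the map $e\mapsto(\calL_\beta e)_{|\beta|=l}$ injective on $\mathrm{im\,}M^*$, and uses a left inverse $(K_\beta)$ to build the degree-$l$ polynomial $P(x)=\sum_{|\beta|=l}\tfrac{x^\beta}{\beta!}K_\beta^*$ satisfying $\calL^*P=\id_{\mathrm{im\,}M^*}$. Writing $\varphi=\log|x|M^*w$, integration by parts against $\calL f=0$ gives
\[\int\langle\varphi,f\rangle=\int\langle[\calL^*P]\varphi-\calL^*[P\varphi],f\rangle=-\sum_{j=1}^l\int\langle B_j(D^j\varphi,D^{l-j}P),f\rangle,\]
and the crucial point is that for $j\geq 1$ the product $D^j\varphi\cdot D^{l-j}P$ is \emph{bounded}, since $|D^j\log|x||\lesssim|x|^{-j}$ exactly cancels $|D^{l-j}P|\lesssim|x|^{j}$. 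The logarithmic singularity never appears among the surviving terms because the $j=0$ term is killed by the constraint $\calL f=0$. This is the algebraic mechanism that replaces the ``missing decay'' you are hoping to extract from \cite[Lem.~2.2]{BVS}. If you want to pursue your route, you need to state and verify the exact form of that lemma and show it applies to the indicator/log test functions you generate; as written, the gap is real.
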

\begin{proof}
	By Lemma~\ref{lem:alg_reduction}, we can assume that $\calL$ is homogeneous, say of order $l$, which we write as $\calL =\sum_{|\beta|=l}\calL_\beta\partial^\beta$, where $\calL_\beta\in\lin(E,F)$.
Since $(\xi^\beta)_{|\beta|=l}$ is a basis for homogeneous polynomials of degree $l$, we have that $e\in\ker\calL(\xi)$ for all $\xi\neq0$ is equivalent with $e$ lying in the kernel of the map $T\colon w\mapsto(\calL_\beta e)_{|\beta|=l}$. By assumption, we have that $\mathrm{im\,}M^*\cap\bigcap_{\xi\in\mathbb{S}^{n-1}}\ker\calL(\xi)=\{0\}$, hence the restriction of $T$ to $\mathrm{im\,}M^*$ is injective. Equivalently, this restriction is left-invertible, so there exist linear maps $K_\alpha\in\lin(F,\mathrm{im\,}M^*)$ such that
\begin{align*}
\sum_{|\beta|=l}K_\beta \calL_\beta\restriction_{\mathrm{im\,}M^*}=\id_{\mathrm{im\,}M^*}.
\end{align*}
Define now the matrix-valued field
\begin{align*}
P(x)\coloneqq\sum_{|\beta|=l}\dfrac{x^\beta}{\beta!}K_\beta^*,
\end{align*}
which is essentially a right-inverse (integral) of $\calL^*$, as
\begin{align}\label{eq:right_FS}
\mathcal{L}^*P=\sum_{|\beta|=l}\mathcal{L}^*_\beta\partial^\beta P=\sum_{|\beta|=l}\mathcal{L}^*_\beta K^*_{\beta}=\id_{\mathrm{im\,}M^*}.
\end{align}
Writing $\varphi\coloneqq\log|x|M^*w$ and integrating by parts using $\calL f=0$, we have that
\begin{align*}
\left|\int_{\R^n}\langle\varphi,f\rangle\dif x\right|=\left|\int_{\R^n}\langle[\mathcal{L}^*P]\varphi,f\rangle\dif x\right|
=\left|\int_{\R^n}\langle[\mathcal{L}^*P]\varphi-\mathcal{L}^*[P\varphi],f\rangle\dif x\right|.
\end{align*}
We then note that:
\begin{align*}
[\mathcal{L}^*P]\varphi-\mathcal{L}^*[P\varphi]=[\mathcal{L}^*P]\varphi-[\mathcal{L}^*P]\varphi-\sum_{j=1}^lB_j(D^j\varphi,D^{l-j}P),
\end{align*}
where $B_j$ are bilinear pairings on finite dimensional spaces that depend on $\mathcal{L} $ only. Note that $|D^{j}\varphi|\leq c|\cdot|^{-j}$ and $|D^{l-j}P|\leq c|\cdot|^j$ for $j= 1,\ldots l$ (here it is crucial that $j\geq 1$), so the conclusion follows. 
\end{proof}

\begin{proof}[Proof of sufficiency of \eqref{eq:compensated_w_canc}] 
	By \eqref{eq:conv}, the triangle inequality, and Young's convolution inequality, we have that
	\begin{align*}
	\|D^{k-n}u\|_{\lebe^\infty}&\leq c\left(\|H_0\ast f\|_{\lebe^\infty}+\|\log|\cdot|\ast[M_\A f]\|_{\lebe^\infty}\right)\\
	&\leq c\left(\|H_0\|_{\lebe^\infty}\|f\|_{\lebe^1}+\|\log|\cdot|\ast[M_\A f]\|_{\lebe^\infty}\right),
	\end{align*}
	so it suffices to prove that 
	\begin{align*}
	\|\log|\cdot|\ast[M_\A f]\|_{\lebe^\infty}\leq c\|f\|_{\lebe^1},
	\end{align*}
	for $u\in C^\infty_c(\R^n,V)$. Equivalently, it remains to show that for all $v\in V$, $\eta\in\R^n$ of unit length, we have that
	\begin{align*}
	\left|\int_{\R^n}\langle\log|y|v\otimes^{k-n}\eta,M_\A f(x-y)\rangle\dif y\right|\leq c\|f\|_{\lebe^1}\quad\text{for }\mathscr{L}^n\text{-a.e. }x\in\R^n,
	\end{align*}
	which follows from Lemma~\ref{lem:BVS_var} with $M=M_\A$, $\calL=(\calC,L(D))$, $W=V\odot^{k-n}\R^n$, and $w=v\otimes^{k-n}\eta$ (note that the estimate of Lemma~\ref{lem:BVS_var} is translation invariant). Here we wrote, as in the proof of sufficiency of \eqref{eq:compensated_canc} for Theorem~\ref{thm1}, $L(D)$ for an exact annihilator of $\A$, by which we mean $\ker L(\xi)=\mathrm{im\,}\A(\xi)$ for all $\xi\neq 0$ (see \cite[Sec.~4.2]{VS}). With this notation, condition~\eqref{eq:compensated_w_canc} is equivalent with the assumption on $M$ and $\calL$ in Lemma~\ref{lem:BVS_var}.
\end{proof}

\section*{Acknowledgements}
B.R. thanks the National Center for Theoretical Sciences of Taiwan for the funding  and the National Chiao Tung University for their hospitality during his visit, when most of the present work was completed. This project has received funding from the European Research Council (ERC) under the European Union's Horizon 2020 research and innovation programme under grant agreement No 757254 (SINGULARITY).

D.S. is supported in part by the Taiwan Ministry of Science and Technology under research grant 107-2115-M-009-002-MY2.

\bibliographystyle{amsplain}

\end{document}